\documentclass[a4paper]{amsart}

\usepackage[T1]{fontenc}
\usepackage{lmodern}
\usepackage{microtype}

\usepackage{mathtools}
\usepackage{amssymb}
\usepackage{amsthm}

\usepackage{graphicx}
\usepackage{tikz}

\usepackage{enumitem}

\usepackage[hidelinks]{hyperref}
\usepackage[nameinlink]{cleveref}

\theoremstyle{plain}
\newtheorem{theorem}{Theorem}[section]
\newtheorem{lemma}[theorem]{Lemma}

\newtheorem{corollary}[theorem]{Corollary}

\newtheorem*{unnumberedproblem}{Problem}

\theoremstyle{definition}
\newtheorem{definition}[theorem]{Definition}

\theoremstyle{remark}
\newtheorem{remark}[theorem]{Remark}

\newcommand{\qua}{\hskip 0.4em\ignorespaces}

\def\arxiv#1{%
  \relax
  \ifhmode\unskip\qua\fi
  \href{https://arxiv.org/abs/#1}%
       {\texttt{arXiv:\penalty-100\unskip#1}}%
}

\def\MR#1{%
  \relax
  \ifhmode\unskip\qua\fi
  \href{https://mathscinet.ams.org/mathscinet-getitem?mr=#1}%
       {\texttt{MR#1}}%
}

\def\ZB#1{%
  \relax
  \ifhmode\unskip\qua\fi
  \href{https://zbmath.org/?q=an:#1}%
       {\texttt{Zbl:#1}}%
}

\def\xox#1{\csname xx#1\endcsname}

\let\originalthebibliography\thebibliography
\let\endoriginalthebibliography\endthebibliography

\renewenvironment{thebibliography}[1]
  {%
    \originalthebibliography{MM20.}%
    \small
    \setlength{\itemsep}{0.5ex}%
    \setlength{\parskip}{0pt}%
  }
  {%
    \endoriginalthebibliography
  }

\ifdefined\pdfsuppresswarningpagegroup
\fi

\title{The support genus does not increase under contact connected sum}

\author{Miguel Orbegozo Rodriguez}
\email{miguel.orbegozorodriguez@gmail.com}
\urladdr{https://sites.google.com/view/miguel-orbegozo-rodriguez/home}

\author{Eric Stenhede}
\email{eric.stenhede@univie.ac.at}

\begin{document}

\begin{abstract}
We show that the support genus of the contact connected sum of two contact $3$-manifolds is at most the maximum of the support genera of the summands. In particular, iterated connected sums of contact manifolds of support genus one still have support genus at most one, and therefore cannot provide candidates for contact manifolds of higher support genus.
\end{abstract}

\maketitle

\section{Introduction}

Giroux's correspondence allows us to study cooriented contact structures on
closed $3$-manifolds through open books~\cite{Giroux02,BHH24,LicVer2_24}. The \emph{support genus} of a contact structure $\xi$, denoted by $\operatorname{sg}(\xi)$, is the minimum genus of a page of an open book supporting $\xi$. This invariant was introduced by Etnyre and Ozbagci in
\cite{EtnyreOzbagci}.\footnote{They note that it had already appeared
implicitly in \cite{PlanarEtnyre}.} We note the following.

\begin{theorem}\label{thm:main}
Let $(M_i,\xi_i)$, $i=1,2$, be closed, connected, cooriented contact
$3$-manifolds. Then
\[
    \operatorname{sg}(\xi_1\#\xi_2)
    \leq
    \max\{\operatorname{sg}(\xi_1),\operatorname{sg}(\xi_2)\},
\]
where $\xi_1\#\xi_2$ denotes the contact structure on $M_1\#M_2$ obtained by taking the contact connected sum.
\end{theorem}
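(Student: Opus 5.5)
Let $g_i=\operatorname{sg}(\xi_i)$, and assume without loss of generality that $g_1\le g_2$. For each $i$, choose an open book $(S_i,\phi_i)$ supporting $\xi_i$ whose page $S_i$ has genus $g_i$. The standard construction for a contact connected sum is the Murasugi (plumbing) sum of open books, which is a special case of Torisu's boundary connected sum of pages. There, $(M_1\#M_2,\xi_1\#\xi_2)$ is supported by the open book with page $S_1\natural S_2$ and monodromy $\phi_1\natural\phi_2$, where each $\phi_i$ is extended by the identity. However, the genus of $S_1\natural S_2$ is $g_1+g_2$, which is too large. So the plan is to modify the summand of smaller genus before gluing, so that genus is absorbed rather than added.

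\textbf{Key idea.} We glue along more than one boundary arc, so that the genus of $S_1$ is traded for boundary components. If $S_1$ and $S_2$ are glued along $k$ disjoint arcs in their boundaries, then Euler characteristic is additive minus $k$, and the number of boundary components can collapse. The concrete plan is the following.

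\textbf{Step 1.} Stabilize $(S_2,\phi_2)$ positively enough times that $S_2$ acquires many boundary components. Stabilization adds 1-handles, which can raise the boundary count without changing genus, by attaching a handle with both feet on the same boundary component.

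\textbf{Step 2.} Describe the connected sum as a ``generalized Murasugi sum'', or equivalently as a Legendrian surgery / Weinstein 1-handle picture. Recall that the connected sum of contact manifolds is obtained by attaching a Weinstein 1-handle to the disjoint union. In open book terms, this can be realized by choosing the page of the disjoint union to be $S_1\sqcup S_2$, with a 1-handle connecting them and no new Dehn twist. We then aim to show that the resulting open book can be rearranged to have genus $\max(g_1,g_2)$.

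Concretely, I would try the following. Write $S_1$ as a planar surface $P$ with $g_1$ ``genus pairs'' obtained by gluing pairs of boundary arcs. Then glue those pairs instead to distinct boundary components of $S_2$, using the many boundary components from Step 1. Gluing a handle between two different boundary components of the surface does not add genus. One then checks, through a sequence of open book moves, that the result supports $\xi_1\#\xi_2$. The moves involved are positive stabilizations, and the fact that a Murasugi sum along a polygon corresponds to a connected sum.

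\textbf{Main obstacle.} The hard step is verifying that the contact structure is correct, i.e.\ that the ``distributed'' gluing still gives $\xi_1\#\xi_2$, since such a multi-arc gluing is not a Murasugi sum in general. My first attempt would avoid this by using a cleaner mechanism. Choose a properly embedded arc system in $S_2$ that is disjoint from the support of $\phi_2$ after stabilization. Then show that the ``genus part'' of $S_1$ can be slid, via handle slides on the page, to become handles attached across a region of $S_2$ where the monodromy is trivial. In that region the genera do not add, because the handles of $S_1$ and the extra boundary arcs of $S_2$ pair up into annular regions. To verify the contact structure, I would use Giroux's correspondence: it suffices to exhibit a common positive stabilization of this open book and of the Murasugi sum $(S_1\natural S_2,\phi_1\natural\phi_2)$.
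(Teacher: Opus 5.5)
There is a genuine gap. The step you call the main obstacle---showing that a genus-absorbing page carries an open book supporting $\xi_1\#\xi_2$---is never carried out, and the mechanisms you offer for reducing genus do not give well-defined open books. Re-gluing the ``genus pairs'' of $S_1$ to boundary components of $S_2$, or sliding handles of $S_1$ into a region of $S_2$, destroys $S_1$ as a subsurface of the new page. Since $\phi_1$ is in general supported on those handles, it no longer defines a monodromy there. So there is nothing to compare with $(S_1\natural S_2,\phi_1\natural\phi_2)$, and ``exhibit a common positive stabilization'' just restates the goal.

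Your naive version really does fail: gluing all of $S_1$ to all of $S_2$ along $k$ boundary arcs, with the monodromies extended by the identity, gives $S_1\natural S_2$ plus $k-1$ extra bands with trivial monodromy. Each such band adds an $(S^1\times S^2,\xi_{\mathrm{std}})$ summand; already two discs glued along two arcs give the annulus with trivial monodromy. Also, the hierarchy in your first paragraph is reversed: boundary connected sum is the Murasugi sum along a bigon, and Torisu's theorem covers Murasugi sums along arbitrary $2n$-gons.

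The missing idea is that the correct multi-arc gluing \emph{is} a Murasugi sum, so no new contact-topological verification is needed. Take embedded $2n$-gons $P_i\subset S_i$ whose sides alternate between boundary arcs and interior arcs. Then $S_1*_PS_2$ is $S_1$ glued to $\overline{S_2\setminus P_2}$ along the $n$ boundary edges of $P_1$; here $\overline{S_2\setminus P_2}$ is $S_2$ cut along the $n$ interior edges of $P_2$, with $P_2$ discarded. This cutting keeps $\chi=\chi(S_1)+\chi(S_2)-1$ for every $n$ and avoids the extra $S^1\times S^2$ summands. Torisu's theorem then says that $(S_1*_PS_2,\overline{\phi}_1\circ\overline{\phi}_2)$ supports $\xi_1\#\xi_2$ for every such choice.

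Hence the theorem reduces to a purely topological problem: choose $P_1,P_2$ so that the sum has genus $\max\{g_1,g_2\}$. No stabilizations or handle slides are needed, and the minimal-genus pages are used as they are. The paper solves this problem in three steps:
\begin{enumerate}[label=(\roman*)]
\item an explicit Murasugi sum of two genus-one surfaces with connected boundary whose result still has genus one (three boundary components, $\chi=-3$);
\item a boundary connected sum of $g$ copies of this for two genus-$g$ surfaces with connected boundary;
\item in general, splitting off by boundary connected sum, away from the polygons, a surface of genus $|g_1-g_2|$ and a planar surface, to account for unequal genus and extra boundary components.
\end{enumerate}
Your proposal contains neither this reduction nor any polygon construction, so as it stands it does not prove the statement.
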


We remark that if either summand is overtwisted, the result is already known: the contact
connected sum is also overtwisted and hence planar~\cite{PlanarEtnyre}. This also shows that
the inequality in Theorem~\ref{thm:main} can be strict.

Much of the literature on the support genus concerns planar open books. Etnyre gave obstructions to planarity~\cite{PlanarEtnyre} by deriving restrictions on the symplectic fillings of planar contact manifolds; further obstructions were
developed in
\cite{PlanarO-S-Sz,planarWendl,planarAndy,PlanarGh-Go-Pl}. These results distinguish support genus zero from positive support genus,
but they do not distinguish support genus one from higher support genus.
The existence of contact structures of support genus greater than one is
a long standing question. Already in the paper in which the support genus was
introduced, Etnyre and Ozbagci observed that, despite the existence of many
potential examples, no contact structure of support genus greater than one
was known~\cite[Section~4]{EtnyreOzbagci}. More recently, the question was
recorded as Problem~3.45 in the K3 problem list.

\begin{unnumberedproblem}[{\cite[Problem 3.45]{K3}}]
Are there contact $3$-manifolds with support genus greater than one?
Are there contact $3$-manifolds with arbitrarily large support genus?
\end{unnumberedproblem}

The problem list suggests two possible approaches. The first is to take
iterated connected sums of contact structures of support genus one. The second is to consider contact structures supported
by open books $(S,\phi)$ for which $S$ has large genus and connected
boundary and $\phi$ is a positive power of a Dehn twist parallel to
$\partial S$; see also~\cite{MassotSupportGenus}. Theorem~\ref{thm:main} rules out the first strategy, as recorded by the following corollary.

\begin{corollary} \label{cor:genus1}
    For $i = 1,...,n$, let $(M_i,\xi_i)$ be closed, connected, cooriented contact manifolds such that $sg(\xi_i) = 1$. Then $\#_{i=1}^n (M_i,\xi_i)$ has support genus at most 1.  \qed
\end{corollary}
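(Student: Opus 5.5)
The corollary follows from Theorem~\ref{thm:main} by induction on $n$. The only preliminary point is that the iterated contact connected sum $\#_{i=1}^n (M_i,\xi_i)$ is well defined up to contactomorphism, independently of bracketing and of the choice of Darboux balls. This is standard: a closed connected contact manifold has a unique Darboux ball up to contact isotopy, and the contact connected sum is associative and commutative up to contactomorphism. Since the support genus is a contactomorphism invariant, we may write the sum as $\bigl(\#_{i=1}^{n-1}(M_i,\xi_i)\bigr)\#(M_n,\xi_n)$.

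For $n=1$ there is nothing to prove, since $\operatorname{sg}(\xi_1)=1$. Suppose the iterated sum $(N,\eta)$ of the first $n-1$ summands satisfies $\operatorname{sg}(\eta)\le 1$. The manifold $N$ is closed, connected and cooriented, being a connected sum of such manifolds, so Theorem~\ref{thm:main} applies to the pair $(N,\eta)$, $(M_n,\xi_n)$. It gives
\[
\operatorname{sg}(\eta\#\xi_n)\le\max\{\operatorname{sg}(\eta),\operatorname{sg}(\xi_n)\}\le\max\{1,1\}=1,
\]
which completes the induction.

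The induction itself is routine, so the only step needing any care is the well-definedness of the iterated sum. I would handle it by citing the standard uniqueness of Darboux balls rather than reproving it. The statement only claims ``at most $1$'', not equality, so no lower bound is needed. Equality would require showing the sum is non-planar, and the known planarity obstructions do not obviously give that.
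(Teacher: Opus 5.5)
Your proposal is correct and matches the paper's approach: the corollary is stated without further proof as an immediate consequence of Theorem~\ref{thm:main}, applied inductively exactly as you do. Your remark that the iterated contact connected sum is well defined, via uniqueness of Darboux balls, is a reasonable addition that the paper leaves implicit.
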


The proof of Theorem~\ref{thm:main} is short. We construct a polygonal Murasugi sum of two compact
surfaces whose genus is the maximum of their genera, and then apply
Torisu's theorem relating Murasugi sum and contact connected
sum~\cite{Contact_murasugi_sum}.

\subsection*{Acknowledgements} The authors would like to thank Peter Feller for helpful conversations, and comments on a first draft of this paper. Miguel Orbegozo Rodriguez is supported by the Swiss NSF grant 200021-212085. Eric Stenhede is supported by the Austrian Science Fund (FWF) project PAT7436924.

\section{Murasugi sums and the proof of Theorem 1.1}

We first recall the Murasugi sum of compact surfaces and of abstract
open books.

\begin{definition}\label{def:murasugi-sum}
Let $\Sigma_1$ and $\Sigma_2$ be compact, connected, oriented surfaces
with nonempty boundary. For $i=1,2$, let $P_i\subset\Sigma_i$ be an
embedded $2n$-gon whose edges alternate between arcs contained in
$\partial\Sigma_i$ and arcs with interior contained in
$\operatorname{int}(\Sigma_i)$. Identify $P_1$ with $P_2$ by an
orientation-preserving diffeomorphism which sends boundary edges of one
polygon to interior edges of the other. After smoothing corners, the
surface
\[
    \Sigma=\Sigma_1\cup_{P_1=P_2}\Sigma_2
\]
is called the \emph{Murasugi sum} of $\Sigma_1$ and $\Sigma_2$ and is denoted by $\Sigma_1*_P\Sigma_2$. Here $P$ is the common image of $P_1$ and $P_2$ in $\Sigma$.

Let $(\Sigma_i,\varphi_i)$, $i=1,2$, be abstract open books. Let
$\overline{\varphi}_i\colon\Sigma\to\Sigma$ be the extension of $\varphi_i$ by the
identity outside $\Sigma_i$. The abstract open book
\[
    \bigl(\Sigma,
    \overline{\varphi}_1\circ\overline{\varphi}_2\bigr)
\]
is called the \emph{Murasugi sum} of
$(\Sigma_1,\varphi_1)$ and $(\Sigma_2,\varphi_2)$. In the interest of clarity we suppress $P$ from the notation and denote this Murasugi sum by $(\Sigma_1,\varphi_1)*(\Sigma_2,\varphi_2)$.
\end{definition}

The following is the key construction that we need to prove Theorem \ref{thm:main}.

\begin{lemma}\label{lem:genus-preserving-murasugi-sum}
Let $\Sigma_i$, $i=1,2$, be compact, connected, oriented surfaces
with nonempty boundary. Then there exists a Murasugi sum $\Sigma$ of
$\Sigma_1$ and $\Sigma_2$ such that
\[
    g(\Sigma)=\max\{g(\Sigma_1),g(\Sigma_2)\}.
\]
\end{lemma}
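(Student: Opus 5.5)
Our plan is to compute the Euler characteristic and boundary components of a Murasugi sum and choose the polygon so that the genus adds as little as possible. Gluing along a $2n$-gon gives $\chi(\Sigma)=\chi(\Sigma_1)+\chi(\Sigma_2)-1$. Since $\chi=2-2g-b$, the identity
\[
    2g(\Sigma)+b(\Sigma)=2g(\Sigma_1)+2g(\Sigma_2)+b(\Sigma_1)+b(\Sigma_2)-1
\]
holds, and everything reduces to making $b(\Sigma)$ large. The naive choice, a square ($n=2$) whose two boundary edges lie on a single boundary component of each $\Sigma_i$, gives a boundary connected sum-like operation with $g=g_1+g_2$. That is too large, so we must use polygons with many sides that meet boundary components in a pattern that splits boundary circles.

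We assume without loss of generality that $g_1=g(\Sigma_1)\ge g_2=g(\Sigma_2)$. The idea is to let $P_2\subset\Sigma_2$ be essentially a neighbourhood of a ``spine'' of the genus of $\Sigma_2$, so that the genus of $\Sigma_2$ is absorbed into the genus already present in $\Sigma_1$. Concretely, we first reduce by induction to gluing a single handle. Write $\Sigma_2$ as a disk with $2g_2+b_2-1$ bands attached. We then choose $P_2$ to be the disk together with the ``feet'' of the bands: a $2n$-gon whose interior edges are the attaching arcs of the bands, and whose boundary edges are the arcs of $\partial\Sigma_2$ between them. Thus $\Sigma_2$ is $P_2$ with bands attached along every other edge. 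On the $\Sigma_1$ side, we take $P_1$ to be a polygon whose boundary edges lie on $\partial\Sigma_1$ and which is chosen so that attaching the bands of $\Sigma_2$ to $\Sigma_1$ along these edges is equivalent to attaching bands to $\partial\Sigma_1$ in a prescribed combinatorial pattern. The Murasugi sum is then $\Sigma_1$ with $2g_2+b_2-1$ bands attached to $\partial\Sigma_1$ at the boundary edges of $P_1$, with an ordering on the arcs determined by the cyclic order around $P$.

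The key step, and the one we expect to be the main obstacle, is choosing $P_1$ so that the attached bands never raise genus. Each band attached to a boundary of $\Sigma_1$ either joins two boundary components, raising neither genus nor boundary count advantageously, or has both feet on one component. In the latter case it either splits that component (genus unchanged) or not (genus $+1$). We want every band to split or merge without creating genus. We would try this first: place the boundary edges of $P_1$ on distinct boundary components of $\Sigma_1$ where possible, and otherwise on a single component in an order chosen so that each pair of feet coming from a handle pair of $\Sigma_2$ (which are interleaved in the cyclic order on $P_2$) becomes non-interleaved relative to $\partial\Sigma_1$. This uses the fact that the polygon's edges are reordered when viewed from the other side, and it is the place where nontrivial paths through the interior of $\Sigma_1$ (for instance across handles of $\Sigma_1$, which is why $g_1\ge g_2$ is needed) can unlink the interleaving.

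Finally we verify the count. A genus-$g_2$ handle pair of $\Sigma_2$ can be realized by routing the interior edges of $P_1$ around a handle of $\Sigma_1$, so the two bands reuse the genus of $\Sigma_1$. We then check by the Euler characteristic formula above that $b(\Sigma)=b_1+b_2+2g_2-1$, which gives $g(\Sigma)=g_1$. We would do the check for $g_2=1,b_2=1$ explicitly with a hexagon/octagon picture and then iterate over the handles of $\Sigma_2$ using distinct handles of $\Sigma_1$.
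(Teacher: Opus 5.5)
\paragraph{The central construction is missing.} Your reformulation is correct and useful. With $P_2$ the $0$-handle of a disk-with-bands decomposition of $\Sigma_2$, the Murasugi sum is $\Sigma_1$ with $k=2g_2+b_2-1$ untwisted bands attached along the boundary edges of $P_1$. By your Euler characteristic identity, the lemma is then equivalent to $b(\Sigma)=b_1+k$, i.e.\ to every band increasing the number of boundary components. But this only restates the lemma: its whole content is the existence of a suitable $P_1$, and the proposal never produces one. Your closing ``check'' that $b(\Sigma)=b_1+b_2+2g_2-1$ is the conclusion itself.

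Nor does ``routing the interior edges of $P_1$ around a handle'' work automatically. Take $\Sigma_1$ a once-punctured torus and $P_1$ the obvious octagon (the $0$-handle of $\Sigma_1$). Its boundary edges appear along $\partial\Sigma_1$ in the reversed cyclic order $e_1,e_4,e_3,e_2$, so the bands $(e_1,e_3)$ and $(e_2,e_4)$ stay interleaved and $g(\Sigma)=2$.

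Your case analysis of band attachment is also off. An untwisted band whose feet lie on the same current boundary component always splits it. A band joining two different boundary components of a connected surface raises the genus by one, as in plumbing two annuli. So the correct criterion is that both feet of each band lie on one component of $\partial\Sigma_1$ and no two bands interleave along it. Spreading feet over ``distinct boundary components where possible'' is harmful whenever it separates the two feet of one band.

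\paragraph{What would close the gap.} You need an explicit base case and an assembly step.

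For two once-punctured tori, one choice that works is as follows. Cut $\Sigma_1$ along three disjoint, pairwise non-isotopic essential arcs into two hexagons, take one, and pinch one of its three boundary edges by a small boundary-parallel arc. Label the boundary edges of the resulting octagon $e_1,\dots,e_4$ in cyclic order, starting with the two halves of the pinched edge. They then appear along $\partial\Sigma_1$ as $e_1,e_2,e_4,e_3$, so the two bands are nested, and $\Sigma$ has three boundary components and genus one. This plays the role of the paper's genus-one figure.

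You must then fit all $g_2$ handle pairs and the $b_2-1$ remaining bands into a \emph{single} embedded polygon $P_1$, because $P_2$ is a single polygon containing all the feet. ``Iterating over the handles'' would give an iterated Murasugi sum, not a Murasugi sum of $\Sigma_1$ and $\Sigma_2$. The paper handles this in two steps. It builds the equal-genus, connected-boundary case as a boundary connected sum of genus-one pieces whose polygons join into one. It then writes $\Sigma_1\cong R\natural\widetilde\Sigma_1$ and $\Sigma_2\cong S\natural\widetilde\Sigma_2$, with the attaching regions disjoint from the polygons.
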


\begin{proof}
Write $g_i=g(\Sigma_i)$ and, after possibly exchanging the two
surfaces, assume that $g_1\geq g_2$.

Suppose first that at least one of the two surfaces is planar. A
boundary connected sum is a particular type of Murasugi sum, and the
genus of a boundary connected sum is the sum of the genera of its
summands. Since one of the two genera is zero, the resulting surface
has genus $\max\{g_1,g_2\}$.

We now assume that $g_1,g_2>0$. We first describe the construction when
the two surfaces have the same genus and connected boundary.

Consider two genus one surfaces with connected boundary. The
desired Murasugi sum is shown in
\Cref{fig:genus-one-murasugi-sum}. The polygons
$P_i\subset\Sigma_i$, $i=1,2$, along which we perform the Murasugi sum
are shown in blue. Their sides alternate between arcs contained in
$\partial\Sigma_i$ and arcs whose interiors are contained in
$\operatorname{int}(\Sigma_i)$. The identification of $P_1$ with $P_2$
interchanges these two types of sides and therefore defines a Murasugi
sum. We denote by $P\subset\Sigma$ the common image of $P_1$ and $P_2$
in the resulting surface. As is visible in the figure, $\Sigma$ has genus
one. This can also be seen by computing $\chi(\Sigma)=-3$ and observing
that $\Sigma$ has three boundary components.

\begin{figure}
    \centering
    \includegraphics[width=0.5\linewidth]{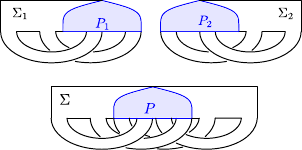}
    \caption{A Murasugi sum of two genus-one surfaces with connected
    boundary whose resulting surface also has genus one. The Murasugi
    polygons $P_1$ and $P_2$, and their common image $P$, are shown in
    blue.}
    \label{fig:genus-one-murasugi-sum}
\end{figure}

We next consider two surfaces with connected boundary and equal genus
$g>1$. The construction is shown schematically in
\Cref{fig:equal-genus-murasugi-sum}. The surface $\Sigma_1$ is represented in the top row, $\Sigma_2$ in the middle one, and $\Sigma$ in the bottom one. Each of the surfaces is
obtained by joining a leftmost piece to a rightmost piece and inserting
$g-2$ copies of the central piece between them. Thus, when no central
piece is inserted, the corresponding surface has genus two, and each
additional central piece increases its genus by one. It is easy to see that $\Sigma_1$ and $\Sigma_2$ have genus $g$.

The blue regions in the individual pieces join to form single embedded
polygons
\[
    P_i\subset\Sigma_i,\qquad i=1,2.
\]
Their sides alternate between arcs contained in $\partial\Sigma_i$ and
arcs whose interiors are contained in $\operatorname{int}(\Sigma_i)$.
As in the genus one case, the identification of $P_1$ with $P_2$
interchanges these two types of sides and hence defines a Murasugi sum.
The blue regions in the third row similarly join to form the common
image $P\subset\Sigma$.

The resulting surface $\Sigma$ is the boundary connected sum of $g$
copies of the genus one surface $\Sigma$ appearing in
Figure \ref{fig:genus-one-murasugi-sum}. Since the genus of a boundary
connected sum is the sum of the genera of its summands, it follows that
$g(\Sigma)=g$.

\begin{figure}
    \centering
    \includegraphics[width=0.9\linewidth]{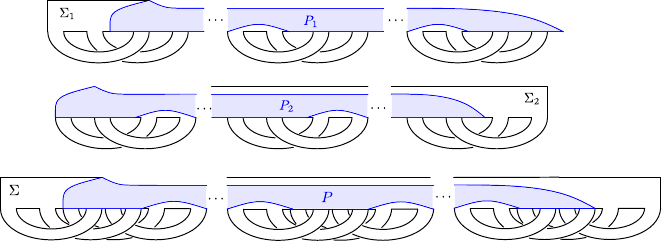}
    \caption{A Murasugi sum of two genus-$g$ surfaces with connected
    boundary whose resulting surface also has genus $g$. In each row,
    $g-2$ copies of the central piece are inserted between the leftmost
    and rightmost pieces. The blue regions join to form the Murasugi
    polygons $P_1$ and $P_2$ and their common image $P$.}
    \label{fig:equal-genus-murasugi-sum}
\end{figure}

We have thus proved the claim when the two surfaces have the same genus
and connected boundary.

We now reduce the general case to this construction. Let
$g_1\geq g_2>0$, and let $b_i$ denote the number of boundary components
of $\Sigma_i$. Choose surfaces of genus $g_2$ with connected boundary
$\widetilde{\Sigma}_1$ and $\widetilde{\Sigma}_2$, equipped with the
Murasugi polygons $P_1\subset\widetilde{\Sigma}_1$ and
$P_2\subset\widetilde{\Sigma}_2$ constructed as in the previous case. Choose also a
surface $R$ of genus $g_1-g_2$ with $b_1$ boundary components and a
planar surface $S$ with $b_2$ boundary components. Then
\[
    \Sigma_1\cong R\natural\widetilde{\Sigma}_1
    \qquad\text{and}\qquad
    \Sigma_2\cong S\natural\widetilde{\Sigma}_2,
\]
where $\natural$ denotes boundary connected sum.

We choose the boundary connected sums so that their attaching
regions are disjoint from $P_1$ and $P_2$. Consequently, the
polygons $P_1$ and $P_2$ remain Murasugi polygons in $\Sigma_1$ and
$\Sigma_2$. Performing the Murasugi sum along these polygons produces a
surface diffeomorphic to
\[
    R\natural
    \bigl(\widetilde{\Sigma}_1*_{P}
          \widetilde{\Sigma}_2\bigr)
    \natural S.
\]
The middle surface has genus $g_2$ by the considerations in the equal genus case
above. Since genus is additive under boundary connected sum and $S$ is
planar, the genus of the resulting surface is
\[
    (g_1-g_2)+g_2+0=g_1
    =\max\{g_1,g_2\}.
\]
This proves the result.

\end{proof}

With this in hand we can now prove Theorem \ref{thm:main}.

\begin{proof}[Proof of Theorem \ref{thm:main}]
For $i=1,2$, choose an open book $(\Sigma_i,\varphi_i)$ supporting
$(M_i,\xi_i)$ such that $g(\Sigma_i)=\operatorname{sg}(\xi_i)$. By Lemma~\ref{lem:genus-preserving-murasugi-sum}, there is a Murasugi sum $\Sigma$ between $\Sigma_1$ and $\Sigma_2$ such that $g(\Sigma)=\max\{g(\Sigma_1),g(\Sigma_2)\}$.

Let $(\Sigma,\Phi)=(\Sigma_1,\varphi_1)*(\Sigma_2,\varphi_2)$ be the corresponding Murasugi sum of abstract open books. By a theorem of Torisu~\cite[Theorem 5.4]{Contact_murasugi_sum},\footnote{See also \cite{MR3460043} for a reformulation of the result in terms of abstract open books.} $(\Sigma,\Phi)$ supports the contact connected sum
\[(M_1,\xi_1)\#(M_2,\xi_2).
\]
Consequently,
\[
\begin{aligned}
    \operatorname{sg}(\xi_1\#\xi_2)
    &\leq g(\Sigma)\\
    &=\max\{g(\Sigma_1),g(\Sigma_2)\}\\
    &=\max\{\operatorname{sg}(\xi_1),
             \operatorname{sg}(\xi_2)\}.
\end{aligned}
\]
\end{proof}

We conclude by observing that the bound in Theorem~\ref{thm:main}
cannot be improved by taking open books for
$\xi_1$ and $\xi_2$ and choosing a different Murasugi sum of their
pages. This is the content of the following lemma.\footnote{We did not find a reference for the proof of Lemma \ref{lem:murasugi-genus-lower-bound} so we include our own proof.}

\begin{lemma}\label{lem:murasugi-genus-lower-bound}
Let $\Sigma_1$ and $\Sigma_2$ be compact, connected, oriented surfaces
with nonempty boundary, and let $g_i=g(\Sigma_i)$ for $i=1,2$. Then any
Murasugi sum $\Sigma$ of $\Sigma_1$ and $\Sigma_2$ satisfies
\[
    g(\Sigma)\geq \max\{g_1,g_2\}.
\]
\end{lemma}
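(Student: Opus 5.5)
Since $\Sigma_1$ and $\Sigma_2$ play symmetric roles in a Murasugi sum, it suffices to prove $g(\Sigma)\geq g_1$. The natural tool is the intersection form on $H_1$. Recall that for a compact connected oriented surface $F$ with nonempty boundary, the algebraic intersection pairing on $H_1(F;\mathbb{Z})$ is skew-symmetric and has rank exactly $2g(F)$. The plan is to exhibit $2g_1$ classes in $H_1(\Sigma)$ whose intersection matrix in $\Sigma$ is nondegenerate; this forces $2g(\Sigma)\geq 2g_1$.

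First, choose a symplectic system of simple closed curves $a_1,b_1,\dots,a_{g_1},b_{g_1}$ in $\Sigma_1$, with $a_j\cdot b_k=\delta_{jk}$ and all other pairings zero. The key claim is that these curves can be isotoped in $\Sigma_1$ to avoid the polygon $P_1$, or at least to avoid its interior in a controlled way. Indeed, $P_1$ is a disk meeting $\partial\Sigma_1$ in $n$ arcs, so $\Sigma_1\setminus \operatorname{int}P_1$ is obtained from $\Sigma_1$ by cutting along the $n$ interior edges of $P_1$. Cutting along properly embedded arcs never raises genus. More precisely, cutting along one arc either keeps the genus or lowers it by one, and one then has to track what happens to $g$.

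This is where the main obstacle lies: cutting along arcs can lower genus, so one cannot simply push the curves off $P_1$. I would handle this by a cleaner route. The inclusion $\Sigma_1\hookrightarrow\Sigma$ is an embedding of oriented surfaces, since $\Sigma_1$ sits inside $\Sigma=\Sigma_1\cup_P\Sigma_2$ as a subsurface, up to smoothing corners. The algebraic intersection number of two closed curves lying in $\Sigma_1$ is computed locally at their intersection points. Because the embedding preserves orientation, this number is the same whether computed in $\Sigma_1$ or in $\Sigma$. Hence the map $i_*\colon H_1(\Sigma_1)\to H_1(\Sigma)$ satisfies $i_*x\cdot i_*y = x\cdot y$.

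It then follows that the classes $i_*a_j, i_*b_j$ have nondegenerate intersection matrix, the standard symplectic matrix of rank $2g_1$. Therefore the pairing on $H_1(\Sigma)$ has rank at least $2g_1$, which gives $g(\Sigma)\geq g_1$. The only points to verify carefully are:
\begin{itemize}
\item that $\Sigma_1$ really embeds in $\Sigma$ as a subsurface, since the Murasugi sum glues $\Sigma_2$ to $\Sigma_1$ along $P$ without identifying distinct points of $\Sigma_1$;
\item the standard fact that the rank of the intersection form equals twice the genus for surfaces with boundary, because its radical is spanned by the boundary classes.
\end{itemize}
Both are routine, so the anticipated obstacle dissolves once one works homologically rather than by isotopy.
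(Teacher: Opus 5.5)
Your proposal is correct and is essentially the paper's argument: both use that $\Sigma_1$ sits in $\Sigma$ as an orientation-preserving subsurface, so a symplectic system $a_1,b_1,\dots,a_{g_1},b_{g_1}$ in $\operatorname{int}(\Sigma_1)$ has the same intersection matrix in $\Sigma$. The only cosmetic difference is that the paper caps off $\partial\Sigma$ and works in the closed surface $\widehat{\Sigma}$, where $\operatorname{rank}H_1(\widehat{\Sigma};\mathbb Z)=2g(\Sigma)$, whereas you stay in $\Sigma$ and use that the intersection form has radical spanned by the boundary classes and hence rank $2g(\Sigma)$ (and your exploratory isotopy paragraph can simply be dropped).
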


\begin{proof}
After possibly exchanging the two summands, assume that $g_1\geq g_2$.
By the definition of a Murasugi sum, $\Sigma_1$ embeds as an
orientation-preserving subsurface of $\Sigma$.

Let $\widehat{\Sigma}$ be the closed surface obtained by capping every
boundary component of $\Sigma$ with a disc. Notice that
$g(\widehat{\Sigma})=g(\Sigma)$.

Choose oriented simple closed curves
\[
    a_1,b_1,\ldots,a_{g_1},b_{g_1}
    \subset \operatorname{int}(\Sigma_1)
\]
whose algebraic intersection
numbers satisfy
\[
    \hat\imath([a_i],[b_j])=\delta_{ij},
    \qquad
    \hat\imath([a_i],[a_j])
    =
    \hat\imath([b_i],[b_j])
    =0.
\]
This is possible because $\Sigma_1$ has genus $g_1$. Regarded as curves in $\widehat{\Sigma}$, they have the same algebraic
intersection numbers. Therefore, their homology classes are linearly
independent in $H_1(\widehat{\Sigma};\mathbb Z)$.

It follows that
\[
    \operatorname{rank}H_1(\widehat{\Sigma};\mathbb Z)\geq 2g_1.
\]
Since $\widehat{\Sigma}$ is a closed surface of genus $g(\Sigma)$, we
have
\[
    \operatorname{rank}H_1(\widehat{\Sigma};\mathbb Z)=2g(\Sigma).
\]
Therefore $g(\Sigma)\geq g_1=\max\{g_1,g_2\}$.
\end{proof}

\begin{remark}
Lemma~\ref{lem:murasugi-genus-lower-bound} shows that the construction
used in the proof of Theorem~\ref{thm:main} has the smallest possible
genus among all Murasugi sums of supporting open books for the two
summands. It does not rule out the existence of a supporting open book
of smaller genus.
\end{remark}

\bibliographystyle{myamsalpha} 
\bibliography{main}

\end{document}